\DeclareMathOperator{\Hom}{\mathscr{H}\text{\kern -3pt {\calligra\Large om}}\,}
\theoremstyle{plain}
\newtheorem{thm}{Theorem}[section]
\newtheorem{lmm}[thm]{Lemma}
\newtheorem{prp}[thm]{Proposition}
\theoremstyle{definition}
\newtheorem{rem}{Remark}
\theoremstyle{definition}
\numberwithin{equation}{section}
\newcommand{\cO}{\mathcal{O}}
\def \hf{\hspace*{0.5cm}}
\begin{document}
%\title{A note on the sheaf of K\"{a}hler differentials}
\title[torsion-freeness of K\"{a}hler differential sheaves]{On torsion-freeness of K\"{a}hler differential sheaves}
\author[N. Das]{Nilkantha Das}

\address{Stat-Math Unit, Indian Statistical Institute, 203 B.T. Road, Kolkata 700 108, India.}
\email{dasnilkantha17@gmail.com}

\author[S. Roy]{Sumit Roy}
\address{Stat-Math Unit, Indian Statistical Institute, 203 B.T. Road, Kolkata 700 108, India.}
\email{sumit.roy061@gmail.com}
\subjclass[2020]{ 14F10, 13N05, 14B05}
\keywords{K\"{a}hler differential, torsion-free, normal variety}

\begin{abstract}
Let $X$ be a normal algebraic variety over an algebraically closed field $k$ of characteristic zero. We prove that the K\"{a}hler differential sheaf of $X$ is torsion-free if and only if any regular section of the ideal sheaf of the first order deformation of $X$ inside $X\times_k X$, defined outside the singular locus of $X \times_k X$, extends regularly to the singular locus.
\end{abstract}

\maketitle

\section{Introduction}\label{intro} 
Let $k$ be an algebraically closed field of characteristic zero, and $X$ be an algebraic variety over $k$. Among other few objects that come naturally with the structure of $X$, the sheaf of K\"{a}hler differentials $\Omega_X:= \Omega_{X/k}$ and its higher exterior powers are the central ones. For example, the algebraic de Rahm cohomology theory can be defined using the complex of regular differential forms (cf. \cite{Hartshorne_Algebraic}). There are several open problems that predict the structure of a variety in terms of the sheaf of K\"{a}hler differentials. For example, the Berger conjecture: a curve is non-singular if and only if its K\"{a}hler differential sheaf is torsion-free (\cite{Berger, Huneke}), and the Zariski-Lipman conjecture: a normal variety $X$ is non-singular if and only if the dual of $\Omega_X$ is locally free. The Zariski-Lipaman conjecture has been studied extensively by several mathematicians over the last few decades (cf. \cite{ Biswas-Gurjar-Kolte, Flenner,Graf,Graf-Kovacs, Jorder, Lipman, Miller-Vassiliadou}). The study of the various properties of $\Omega_X$ is an active area of research till date. \\
\hf It is well known that an algebraic variety $X$ is non-singular if and only if $\Omega_X$ is locally free. It is also well known that the torsion-free sheaves form a larger class than the class of locally free sheaves, and the reflexive sheaves form a larger class than the torsion-free ones. Therefore for singular varieties, it is quite natural to ask for conditions on $X$ for which $\Omega_X$ is reflexive or at least torsion-free, or if there is an equivalent criterion for $\Omega_X$ to be  torsion-free or reflexive. As mentioned in \cite{Red_book}, these kind of questions are quite interesting. Reflexivity of differential forms has some interesting geometrical consequences (for more details, see \cite{Greb-KP, Kebekus}). Lipman in \cite[Proposition 8.1]{Lipman} proved that if $X$ is a normal and local complete intersection, then $\Omega_X$ is torsion-free, and it is reflexive if and only if $X$ is regular in codimension $2$. An alternative proof of this result can be found in  \cite[Corolloary 9.8]{Kunz}. In general, $\Omega_X$ is not torsion-free, even if we consider $X$ to be normal. For example, the K\"{a}hler differential sheaf of the affine cone over the twisted cubic has torsion elements (cf. \cite{Greb-R}). In \cite{Greb-R}, Greb and Rollenske developed an equivalent criterion for the differential sheaf of the affine cone over a smooth projective variety to be torsion-free. They proved that the differential sheaf of the affine cone over a smooth projectively normal variety is torsion-free if and only if the first infinitesimal neighbourhood of the variety is projectively normal. Lipman in \cite[Lemma, p. 897]{Lipman} also gave an equivalent cohomological criterion for a coherent sheaf to be torsion-free and reflexive, respectively. It is worthwhile to note that the above criterion works for any coherent sheaf. Nothing special happens if we restrict our attention to $\Omega_X$. It is tempting to ask for special equivalent criterion for $\Omega_X$ to be torsion-free, or reflexive. \\
\hf The goal of the present note is to give an equivalent criterion for $\Omega_X$ to be torsion-free when $X$ is a normal variety. We have proved the following
\begin{thm}\label{torsion-free_theorem}
Let $X$ be a normal algebraic variety over an algebraically closed field $k$ of characteristic zero. Then the K\"{a}hler differential sheaf $\Omega_{X/k}$ is torsion-free if and only if the natural morphism $$\mathcal{I}^2 \longrightarrow j_*j^* \mathcal{I}^2$$ is surjective, where $\mathcal{I}$ is the ideal sheaf given by the diagonal embedding $\Delta:X \longrightarrow X \times_k X$ and $j$ is the inclusion map of the non-singular locus of $X \times_k X$ inside $X \times_k X$.
\end{thm}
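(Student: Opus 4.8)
The plan is to identify $\Omega_{X/k}$ with the conormal sheaf $\mathcal{I}/\mathcal{I}^2$ of the diagonal and then to convert the torsion-freeness of this sheaf into the surjectivity statement through a single diagram chase. Write $Y=X\times_k X$ and $V=Y\setminus Y_{\mathrm{sing}}$, so that $j\colon V\hookrightarrow Y$ is the given open immersion. Since $X$ is normal and $\mathrm{char}\,k=0$, the product $Y$ is normal and $Y_{\mathrm{sing}}=(X_{\mathrm{sing}}\times X)\cup(X\times X_{\mathrm{sing}})$ has codimension $\ge 2$ in $Y$; moreover $\Delta(X)\cap V=\Delta(X_{\mathrm{sm}})$ and $\Delta(X_{\mathrm{sing}})$ has codimension $\ge 2$ in $\Delta(X)\cong X$. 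The first thing I would record is that, under the standard isomorphism $\mathcal{I}/\mathcal{I}^2\cong\Delta_*\Omega_{X/k}$, the torsion subsheaf of $\Omega_{X/k}$ is exactly the kernel of the adjunction map $c\colon\mathcal{I}/\mathcal{I}^2\to j_*j^*(\mathcal{I}/\mathcal{I}^2)$: on the smooth locus $\Omega_{X/k}$ is locally free, so a section lies in $\ker c$ if and only if it is supported on $X_{\mathrm{sing}}$, if and only if it is torsion. Thus $\Omega_{X/k}$ is torsion-free precisely when $c$ is injective.

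Next I would assemble the commutative diagram obtained from the conormal sequence $0\to\mathcal{I}^2\xrightarrow{\alpha}\mathcal{I}\xrightarrow{\beta}\mathcal{I}/\mathcal{I}^2\to 0$ and its image under $j_*j^*$, with horizontal maps $\alpha',\beta'$ on the bottom and the three vertical adjunction maps $a$ on $\mathcal{I}^2$, $b$ on $\mathcal{I}$, and $c$ on $\mathcal{I}/\mathcal{I}^2$. Because $j^*$ is exact and $j_*$ is left exact, the bottom row $0\to j_*j^*\mathcal{I}^2\xrightarrow{\alpha'}j_*j^*\mathcal{I}\xrightarrow{\beta'}j_*j^*(\mathcal{I}/\mathcal{I}^2)$ is exact, and since $\mathcal{I}^2\subset\mathcal{O}_Y$ is torsion-free the maps $a$ and $b$ are injective. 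The crucial step is to show that $b\colon\mathcal{I}\to j_*j^*\mathcal{I}$ is an isomorphism. For this I would apply $j_*j^*$ to $0\to\mathcal{I}\to\mathcal{O}_Y\to\Delta_*\mathcal{O}_X\to 0$ and use normality twice: $j_*j^*\mathcal{O}_Y=\mathcal{O}_Y$ by Hartogs extension on the normal variety $Y$, while $j_*j^*\Delta_*\mathcal{O}_X=\Delta_*\mathcal{O}_X$ by Hartogs extension on the normal variety $X$ (this is where the codimension estimate for $\Delta(X_{\mathrm{sing}})$ is used). Comparing kernels then identifies $j_*j^*\mathcal{I}$ with $\ker(\mathcal{O}_Y\to\Delta_*\mathcal{O}_X)=\mathcal{I}$, compatibly with $b$.

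With $b$ an isomorphism and $a$ injective, a diagram chase produces a connecting map $\delta\colon\ker c\to\mathrm{coker}\,a=(j_*j^*\mathcal{I}^2)/\mathcal{I}^2$, which I claim is an isomorphism. Given a local section $t\in\ker c$, lift it to $s\in\mathcal{I}$ via $\beta$; then $b(s)\in\ker\beta'=\mathrm{im}\,\alpha'$, so $b(s)=\alpha'(u)$ for a unique $u\in j_*j^*\mathcal{I}^2$, and $\delta(t):=u\bmod\mathcal{I}^2$ is independent of the chosen lift. Injectivity of $\delta$ follows from injectivity of $b$ and the relation $\beta\circ\alpha=0$, while surjectivity follows from surjectivity of $b$: given $u$, solve $b(s)=\alpha'(u)$ and set $t=\beta(s)$. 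Hence $\ker c\cong\mathrm{coker}\,a$, so $c$ is injective if and only if $a\colon\mathcal{I}^2\to j_*j^*\mathcal{I}^2$ is surjective, which is exactly the asserted equivalence. I expect the main obstacle to be establishing that $b$ is an isomorphism: this is where normality of both $Y$ and $X$ genuinely enters, and one must check that the two Hartogs-type extensions are compatible with the restriction map $\mathcal{O}_Y\to\Delta_*\mathcal{O}_X$, so that the identification $j_*j^*\mathcal{I}\cong\mathcal{I}$ is the one induced by $b$ rather than merely an abstract isomorphism.
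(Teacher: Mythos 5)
Your proof is correct, and its engine is the same as the paper's: the conormal sequence $0\to\mathcal{I}^2\to\mathcal{I}\to\mathcal{I}/\mathcal{I}^2\to 0$, its image under the left-exact functor $j_*j^*$, the isomorphism $\mathcal{I}\cong j_*j^*\mathcal{I}$, and the snake lemma yielding $\ker c\cong\operatorname{coker}a$, hence ``$c$ injective iff $a$ surjective.'' Where you genuinely differ is in how you establish the two supporting facts. First, you identify the torsion subsheaf of $\Omega_X$ with $\ker c$ directly by a support argument (torsion sections die on the smooth locus because $\Omega_U$ is locally free on an integral scheme, and conversely sections killed on the smooth locus are supported on $X_{\mathrm{sing}}$, hence torsion); the paper instead routes through the double dual, proving that injectivity of $\Omega_X\to\Omega_X^{\vee\vee}$ is equivalent to injectivity of $\Omega_X\to i_*i^*\Omega_X$, which requires the reflexive-sheaf machinery of \cite[Corollary 1.2, Proposition 1.6]{Hartshorne_stable_reflexive_sheaves}. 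Your route bypasses that entire intermediate proposition and is more elementary. Second, for the isomorphism $b\colon\mathcal{I}\to j_*j^*\mathcal{I}$ you apply $j_*j^*$ to $0\to\mathcal{I}\to\mathcal{O}_Y\to\Delta_*\mathcal{O}_X\to 0$ and invoke Hartogs twice, on $Y=X\times_kX$ and on $X$ itself; the compatibility issue you flag at the end is in fact automatic from naturality of the unit of adjunction, since $j_*j^*\Delta_*\mathcal{O}_X=\Delta_*i_*i^*\mathcal{O}_X$ and the relevant vertical map is $\Delta_*$ applied to the unit $\mathcal{O}_X\to i_*i^*\mathcal{O}_X$, so this is not a real obstacle. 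The paper instead proves $\mathcal{I}\cong j_*j^*\mathcal{I}$ as a free-standing statement (its Proposition 2.2) valid for the ideal sheaf of \emph{any} integral closed subscheme meeting the big open set, via scheme-theoretic closure and uniqueness of the reduced subscheme structure on a given support. Your version is shorter and self-contained but uses normality of the diagonal $\Delta(X)\cong X$ in an essential way; the paper's lemma needs no normality of the subscheme and has independent interest. One point both treatments leave largely implicit, which you at least state explicitly: the non-singular locus of $X\times_kX$ equals $U\times_kU$, so that your $j$ is the same as the one in the statement of the theorem.
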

\noindent It is worthwhile to note that $\mathcal{I}^2$ is the ideal sheaf of the first order deformation of the image of $X$ inside $X \times_k X$ under the diagonal morphism $\Delta$. \Cref{torsion-free_theorem} can be rephrased as follows:
the K\"{a}hler differential sheaf $\Omega_{X/k}$ is torsion-free if and only if for an open subset $U$ of $X \times_k X$, a section of $\mathcal{I}^2$ defined outside the singular locus of $U$, can be extended to a section of it over $U$. 
Loosely speaking, torsion-freeness of $\Omega_X$ is equivalent to an extension problem of certain sections defined outside the singular locus of $X \times_k X$.\\
\hf Let us phrase the problem slightly differently. The torsion submodule of $\Omega_{X}$ is precisely the kernel of the natural map 
\begin{equation*}
\Omega_{X}\longrightarrow \Omega_{X}^{\vee \vee},
\end{equation*}
where $ \Omega_{X}^{\vee \vee}$ is the double dual of $ \Omega_{X}$. Thus $\Omega_{X}$ is torsion-free if and only if the above morphism is injective. Therefore \Cref{torsion-free_theorem} is now converted to prove the injectivity of the above morphism. This observation will be used in the proof of \Cref{torsion-free_theorem}.\\

\noindent\textbf{Notation:} Throughout this note we assume $k$ is an algebraically closed field of characteristic zero. An algebraic variety over $k$ stands for an integral separated scheme of finite type over $k$. Given an algebraic variety $X/k$, the K\"{a}hler differential sheaf $\Omega_{X/k}$ is denoted by $\Omega_X$ for the notational convenience. Given an algebraic variety $X$, the diagonal map $X \longrightarrow X \times_k X$ is denoted by $\Delta_X$ (or simply by $\Delta$ if there is no cause of confusion).
\section{Preliminaries}
In this section, we recall some basic facts that we need while proving \Cref{torsion-free_theorem}. Although the following result is well-known, we will sketch a proof for the convenience of the reader.
\begin{lmm}\label{normality preserves under product}
Let $k$ be an algebraically closed field, and $X$, $Y$ be two algebraic varieties over $k$. If both $X$ and $Y$ are normal, so is their product $X \times_k Y$.  
\end{lmm}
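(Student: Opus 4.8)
The plan is to verify normality of $X \times_k Y$ through Serre's criterion, reducing to the two conditions $R_1$ (regularity in codimension one) and $S_2$, each of which ascends along a flat morphism once the fibres are controlled. The key input is that an algebraically closed field is perfect, which lets me upgrade normality to \emph{geometric} normality wherever I need it.

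First I would check that $X \times_k Y$ is itself a variety, i.e. integral, so that normality can be phrased via integral closedness of its local rings. Since $k$ is perfect, each of the integral $k$-schemes $X$ and $Y$ is geometrically integral; and the product of a geometrically integral $k$-scheme with an integral $k$-scheme is again integral. Hence $X \times_k Y$ is integral. Next I would exploit the projection $p \colon X \times_k Y \to Y$, which is flat because it is the base change of the structure morphism $X \to \operatorname{Spec} k$, and every scheme over a field is flat. The fibre of $p$ over a point $y \in Y$ is $X_{\kappa(y)} = X \times_k \kappa(y)$; because $k$ is perfect, normality of $X$ gives geometric normality, so $X_{\kappa(y)}$ is normal and, moreover, stays normal after any further field extension, so the geometric fibres of $p$ are normal as well.

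Then I would invoke the ascent of Serre's conditions along a flat morphism (cf.\ EGA IV or Matsumura): if the base $Y$ is $R_1$ (resp.\ $S_2$) and the geometric fibres of $p$ are $R_1$ (resp.\ $S_2$), then the total space $X \times_k Y$ is $R_1$ (resp.\ $S_2$). Since $Y$ is normal it satisfies both $R_1$ and $S_2$, and the previous step arranges the fibres to do the same. Therefore $X \times_k Y$ satisfies $R_1$ and $S_2$, and Serre's criterion yields normality. Equivalently, one can argue pointwise: for $z \in X \times_k Y$ lying over $y \in Y$, the local ring $\mathcal{O}_{X \times_k Y, z}$ is flat over the normal ring $\mathcal{O}_{Y, y}$ with normal fibre coming from $X_{\kappa(y)}$, and a flat local homomorphism of Noetherian local rings with normal base and geometrically normal fibre has normal source.

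The main obstacle is precisely the passage from normal to geometrically normal: this is the only place where the hypothesis on $k$ is essential, since over a non-perfect field the fibres $X_{\kappa(y)}$ could acquire singularities after base change and normality would fail. Once geometric normality of the fibres is secured, the flat ascent of $R_1$ and $S_2$ is the technical engine, while the geometric integrality check is routine given that $k$ is algebraically closed.
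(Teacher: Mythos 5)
Your proof is correct, and at its core it follows the same reduction as the paper: both arguments hinge on the fact that over an algebraically closed field normality is equivalent to geometric normality, and both exploit the flat projection $X \times_k Y \to Y$ together with normality of its fibres $X \times_k \kappa(y)$. The difference is that the paper stops there and quotes the ``normality theorem'' (EGA IV; Stacks Project, Lemma 33.10.5) as a black box, whereas you open the box: you prove the ascent yourself by splitting normality into $R_1$ and $S_2$ via Serre's criterion and invoking the flat ascent of each condition from base and fibres to the total space. Your route is more self-contained and makes visible exactly where the hypothesis on $k$ enters; the paper's is shorter. You also verify that $X \times_k Y$ is integral (so that it is genuinely a variety), a point the paper's proof passes over in silence. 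One caveat: in your closing ``pointwise'' reformulation, the lemma ``a flat local homomorphism of Noetherian local rings with normal base and geometrically normal fibre has normal source'' is not correct if ``fibre'' means only the closed fibre $B \otimes_A \kappa(\mathfrak{m})$; the correct statement (Matsumura, Theorem 23.9, or EGA IV 6.5.4) requires \emph{all} fibre rings $B \otimes_A \kappa(\mathfrak{p})$, $\mathfrak{p} \in \operatorname{Spec} A$, to be normal. This is harmless in your situation, since every scheme-theoretic fibre of the projection is normal and hence so is every fibre ring of $\mathcal{O}_{Y,y} \to \mathcal{O}_{X \times_k Y,\, z}$; and in any case your main argument via the $R_1$ and $S_2$ ascent --- which you do state, correctly, in terms of all the fibres of $p$ --- is complete on its own.
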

\begin{proof}
    Recall that a morphism of schemes is said to be normal if it is flat and for every fiber of the morphism is geometrically normal. Since $k$ is algebraically closed, $X$ is a normal variety if and only if $X$ is geometrically normal over $k$. Also $X \rightarrow \text{spec }k$ is always flat. Therefore, $X$ is a normal variety if and only if $X \rightarrow \text{spec }k$ is normal. By the normality theorem (cf. \cite{EGA-IV}, \cite[Lemma 33.10.5]{stacks-project}), we conclude that $X\times_k Y$ is a normal scheme over $k$. We are done.
\end{proof}
As a consequence of the above result, we get if $X$ is a normal variety over $k$, then so is $X \times_k X$. \\
\hf Next, we move towards an extension problem of sections of certain kind of coherent ideal sheaves of a normal scheme. It is well-known that a reflexive sheaf $\mathcal{F}$ on a normal scheme $X$ is normal. That is, given an open subset $V$ of $X$ and an open subset $U\subseteq V$ that contain all the codimension $1$ points of $V$, any section of $\mathcal{F}$ defined over $U$ can be extended to a section over $V$. Though the ideal sheaves are not reflexive in general, they enjoy the same extension property for some special kind of open subsets $U$.  
\begin{prp} \label{ideal_sheaf_isomorphism_lemma}
Let $\mathcal{I}$ be an ideal sheaf of a normal integral locally Noetherian scheme $X$ over $k$ such that it defines an integrally closed subscheme $Y$ of $X$. Assume $U$ is an open subset of $X$ containing all the codimension $1$ points of $X$ and $U \cap Y \neq \emptyset$. If the inclusion of $U$ inside $X$ is denoted by $j$, the  the natural morphism 
    \begin{equation}\label{ideal_morphism}
    \mathcal{I} \longrightarrow j_*j^*\mathcal{I}    
    \end{equation}
   of sheaves on $X$ is an isomorphism.
\end{prp}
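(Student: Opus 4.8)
The plan is to realize \eqref{ideal_morphism} as the unit of the restriction--pushforward adjunction and to compare it against the analogous maps for the structure sheaves of $X$ and of $Y$. Set $Z := X \setminus U$. Since $U$ contains every codimension-one point of $X$, no generic point of an irreducible component of $Z$ can have codimension $0$ or $1$; hence $\operatorname{codim}_X Z \geq 2$, which is exactly the hypothesis needed to apply algebraic Hartogs on the normal scheme $X$. In particular $U$ is dense in $X$.

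First I would dispose of injectivity. As $\mathcal{I} \subseteq \mathcal{O}_X$ and $X$ is integral, $\mathcal{I}$ is torsion-free, and $U$ is dense; therefore a local section of $\mathcal{I}$ restricting to $0$ over $U$ is already $0$, so \eqref{ideal_morphism} is injective. For surjectivity I would start from the short exact sequence
$$0 \to \mathcal{I} \to \mathcal{O}_X \to \mathcal{O}_Y \to 0,$$
with $\mathcal{O}_Y = \mathcal{O}_X/\mathcal{I}$. Applying the exact functor $j^*$ and then the left-exact functor $j_*$ produces the commutative diagram
\[
\begin{array}{ccccccccc}
0 & \to & \mathcal{I} & \to & \mathcal{O}_X & \to & \mathcal{O}_Y & \to & 0 \\
 & & \downarrow a & & \downarrow b & & \downarrow c & & \\
0 & \to & j_*j^*\mathcal{I} & \to & j_*\mathcal{O}_U & \to & j_*\mathcal{O}_{Y \cap U}, & &
\end{array}
\]
whose top row is exact and whose bottom row is exact at its first two nonzero terms (left-exactness of $j_*$), and where $a$ is the morphism \eqref{ideal_morphism}.

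The two vertical maps I need to control are $b$ and $c$. The central arrow $b : \mathcal{O}_X \to j_*\mathcal{O}_U$ is an \emph{isomorphism}: this is precisely the statement that a regular function on a normal scheme extends uniquely across a closed subset of codimension $\geq 2$ (the $S_2$ property, i.e. algebraic Hartogs), which applies since $\operatorname{codim}_X Z \geq 2$. The right-hand arrow $c : \mathcal{O}_Y \to j_*\mathcal{O}_{Y\cap U}$ need \emph{not} be an isomorphism, but it is \emph{injective}: because $Y$ is integrally closed it is in particular integral, and by hypothesis $Y \cap U$ is a nonempty---hence dense---open of $Y$, so any section of $\mathcal{O}_Y$ vanishing on $Y \cap U$ vanishes.

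A short diagram chase then finishes the argument. Given a local section $t$ of $j_*j^*\mathcal{I}$ over an open $V$, I view its image in $j_*\mathcal{O}_U$; since $b$ is an isomorphism this image comes from a unique $\tilde t \in \mathcal{O}_X(V)$. The image of $\tilde t$ in $\mathcal{O}_Y(V)$ is sent by the injective map $c$ to the image of $t$ in $j_*\mathcal{O}_{Y\cap U}(V)$, which is $0$ because $t$ is a section of $\mathcal{I}$; injectivity of $c$ then forces $\tilde t \in \mathcal{I}(V)$, exhibiting $t$ as $a(\tilde t)$. Hence $a$ is surjective, and combined with injectivity it is an isomorphism. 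The one point demanding care---and the crux of the whole argument---is the observation that only injectivity of $c$ is needed: one does \emph{not} require a Hartogs-type extension on $Y$ (which could genuinely fail, since a codimension-one point of $Y$ may have codimension $\geq 2$ in $X$ and thus lie outside $U$), but merely the integrality of $Y$ supplied by the integrally-closedness hypothesis.
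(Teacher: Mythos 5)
Your proof is correct, and it reaches the result by a more self-contained route than the paper's. The paper proves surjectivity through the formalism of scheme-theoretic closure: citing Mumford--Oda, it identifies $j_*j^*\mathcal{I}$ with the ideal sheaf of the scheme-theoretic closure of $V = U \cap Y$, notes that this closure is reduced with support $Y$ (irreducibility of $Y$ plus density of $V$), and concludes $j_*j^*\mathcal{I} \subseteq \mathcal{I}$ from uniqueness of the reduced subscheme structure, whence equality by injectivity. Your diagram chase rests on exactly the same two pillars---algebraic Hartogs on the normal scheme $X$ (your isomorphism $b$) and integrality of $Y$ together with $U \cap Y \neq \emptyset$ (your injective $c$)---but assembles them homologically, with no appeal to scheme-theoretic closures or to an external reference; indeed, injectivity of $c \colon \mathcal{O}_Y \longrightarrow j_*\mathcal{O}_{Y \cap U}$ is precisely equivalent to the paper's claim that the scheme-theoretic closure of $U \cap Y$ in $X$ equals $Y$, since both assert that a section of $\mathcal{O}_X$ lies in $\mathcal{I}$ as soon as it vanishes on $Y \cap U$. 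What your version buys is transparency about where each hypothesis enters: normality of $X$ and $\operatorname{codim}_X(X \setminus U) \geq 2$ feed only into $b$, while integrality of $Y$ and $U \cap Y \neq \emptyset$ feed only into $c$---which dovetails with the paper's remark that the hypothesis $U \cap Y \neq \emptyset$ cannot be dropped. Two small touch-ups: the phrase ``which is $0$ because $t$ is a section of $\mathcal{I}$'' should say ``a section of $j_*j^*\mathcal{I}$,'' since $t \in \mathcal{I}$ is what is being proved (what you actually use is that the bottom row is a complex); and injectivity of $c$ over an open $W$ requires $W \cap Y \cap U \neq \emptyset$ whenever $W \cap Y \neq \emptyset$, which does follow from density of $Y \cap U$ in the irreducible $Y$, but deserves a half-sentence.
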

\begin{proof}
If $X$ is an algebraic curve, the result is straightforward. Form now onwards, we assume $\text{dim }X \geq 2$. First, we will show that the map in \cref{ideal_morphism} is injective. Consider the following commutative diagram
   \begin{equation*}
\begin{gathered}
\xymatrix{
 \mathcal{I} \ar[rr] \ar[d] & & j_*j^*\mathcal{I} \ar[d]\\
 \cO_X \ar[rr]_{\simeq} & & j_*j^* \cO_X
}
\end{gathered} 
\end{equation*}
Note that $X$ is assumed to be normal and codimension of $X \setminus U$ is $\geq 2$, Hartog's phenomenon immediately yields that the natural map $$\cO_X \longrightarrow j_*\cO_U$$ is an isomorphism. That is, the lower horizontal morphism in the diagram is an isomorphism. Since the vertical arrows are injective, it follows that the morphism in \cref{ideal_morphism} is injective.\\
Surjectivity of \cref{ideal_morphism} is quite subtle. Observe that $\mathcal{I}_U$ (equivalently, $j^*\mathcal{I}$) defines a closed subscheme of $U$ whose support is $V:=U \cap Y$. Next we will show that $j_*j^* \mathcal{I}$ defines a closed subscheme of $X$ whose support is $Y$, and subsequently, we show that $\mathcal{I}= j_*j^* \mathcal{I}$. Observe that $\left( V, \cO_U/\mathcal{I}_U \right)$ is a locally closed subscheme of $X$. Now applying \cite[Proposition 2.3.11]{Mumford-Oda}, we conclude that the ideal sheaf $\overline{\mathcal{I}}$ of the scheme theoretic closure of $V$ is given by
$$\overline{\mathcal{I}}(W)= \ker \left( \cO_{X}(W) \longrightarrow \cO_V(V \cap W) \right).$$
Now Hartog's phenomenon immediately yields that $\cO_X(W)= \cO_X(W \cap U)$. On the other hand, $\cO_V(V \cap W)$ is the same as $\left.\dfrac{\cO_X}{I}\right|_U(Y \cap W \cap U)$ which is again the same as $\cO_Y(W \cap U)$. Thus the above kernel is the same as $\left(j_*j^*\mathcal{I}\right)(W)$. Therefore, we conclude that $\overline{\mathcal{I}}=j_*j^*\mathcal{I}$. On the other hand, observe that the underlying topological space of the scheme theoretic closure of $V$ is the closure of $V$ in $X$. Since $Y$ is irreducible, $V$ is a non-empty open subset of $Y$, and hence, dense in $Y$. It follows that the closure of $V$ in $X$ is $Y$ itself. Thus we conclude that $j_*j^* \mathcal{I}$ defines a closed subscheme of $X$ whose support is $Y$. Now observe that $Y$ is assumed to be a reduced closed subscheme. By the uniqueness of reduced closed subscheme structure with a given support (cf. \cite[Proposition 2.3.11]{Mumford-Oda}), it follows that $j_*j^*\mathcal{I} \subseteq \mathcal{I}$. Since the morphism in \cref{ideal_morphism} is injective, it follows that $\mathcal{I}= j_*j^* \mathcal{I}$. This completes the proof.
\end{proof}
\begin{rem}
    In light of criterion (iii) of \cite[Proposition 1.6]{Hartshorne_stable_reflexive_sheaves}, one might think that $\mathcal{I}$ is a reflexive $\mathcal{O}_X$-module. But this is not the case. In fact, criterion (iii) of \cite[Proposition 1.6]{Hartshorne_stable_reflexive_sheaves} is demanding something more than that of  \Cref{ideal_sheaf_isomorphism_lemma}. %Indeed, if we start with an open set $V$ of $X$ and a closed subset of $Z \subseteq V$ of codimension $\geq 2$, then $V \setminus Z$ certainly contains all the codimension $1$ points of $V$, but not necessarily of $X$. So \Cref{ideal_sheaf_isomorphism_lemma} is not always applicable.
    Indeed, if we consider $Y$ is of codimension $\geq 2$, we can't apply \Cref{ideal_sheaf_isomorphism_lemma} for $U= X \setminus Y$ as $U \cap Y= \emptyset$. This is a place where criterion (iii) of \cite[Proposition 1.6]{Hartshorne_stable_reflexive_sheaves} may fails.
\end{rem}
\begin{rem}
    It is worthwhile to note that the hypothesis $U \cap Y \neq \emptyset$ is important. If we remove this hypothesis, \Cref{ideal_sheaf_isomorphism_lemma} is not true anymore. For example,  consider $Y$ is of codimension $\geq 2$, and $U= X \setminus Y$. Then $j_*j^* \mathcal{I}$ is the same as $j_*\cO_U$. The latter is the same as $\cO_X$, thanks to the Hartog's phenomenon. Therefore, the morphism in \cref{ideal_morphism} becomes the natural injection of the ideal sheaf $\mathcal{I}$ inside $\cO_X$, but not an isomorphism. In fact, if we drop this hypothesis from \Cref{ideal_sheaf_isomorphism_lemma},
    it is immediate to conclude that $\mathcal{I}$ is reflexive. Now applying \cite[Corollary 1.5]{Hartshorne_stable_reflexive_sheaves} over the short exact sequence $$0 \longrightarrow \mathcal{I} \longrightarrow \cO_X \longrightarrow \cO_X/\mathcal{I}\longrightarrow 0,$$ we conclude that $\text{Ass }Y$ consists points of codimension $0$ and $1$ in $X$. Thus we get a restriction on $Y$.
\end{rem}
\section{Proof of the theorem}
We have already noticed that given an algebraic variety $X$, $\Omega_X$ is torsion-free if and only if the natural induced map 
\begin{equation}\label{double_dual_natural_map_2}
    \Omega_X \longrightarrow \Omega_X^{\vee \vee}
\end{equation}
of sheaves is injective.
We will first find an alternative criterion of injectivity of the map in \cref{double_dual_natural_map_2}. The following result will be useful in our purpose.
\begin{prp}
Let $U$ be the set of all non-singular points of a normal algebraic variety $X$ over $k$. Let us denote the natural inclusion of $U$ in $X$ by $i$. Then the morphism in \cref{double_dual_natural_map_2} is injective if and only if the natural morphism of sheaves of $\cO_X$-modules
\begin{equation}\label{pull-push-equation}
\Omega_X \longrightarrow i_* i^* \Omega_X
\end{equation}
 is injective. 
\end{prp}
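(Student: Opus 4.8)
The plan is to identify both the double dual $\Omega_X^{\vee\vee}$ and the pushforward $i_*i^*\Omega_X$ as the \emph{same} reflexive extension of $\Omega_U$ across the singular locus, and then to compare the two maps of interest inside a single naturality square. The crucial geometric input — and the only place where normality of $X$ is used — is that the singular locus $Z:=X\setminus U$ has $\mathrm{codim}\,Z\geq 2$, so that $U$ contains every codimension-one point of $X$. Throughout I write $\alpha\colon\Omega_X\to\Omega_X^{\vee\vee}$ for the map in \cref{double_dual_natural_map_2} and $\beta\colon\Omega_X\to i_*i^*\Omega_X$ for the map in \cref{pull-push-equation}.

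First I would record that $\Omega_X^{\vee\vee}$ is a reflexive $\cO_X$-module, being the double dual of a coherent sheaf on an integral normal scheme. Using the normality of reflexive sheaves recalled in the preliminaries (sections extend across closed subsets of codimension $\geq 2$), the adjunction unit $\gamma\colon\Omega_X^{\vee\vee}\to i_*i^*\Omega_X^{\vee\vee}$ is an \emph{isomorphism}, precisely because $\mathrm{codim}\,Z\geq 2$. Next, on the smooth locus $U$ the restriction $\Omega_X|_U=\Omega_U$ is locally free, hence reflexive; since $\mathcal{H}om$ commutes with restriction to open subsets, $i^*(\Omega_X^{\vee\vee})=(\Omega_U)^{\vee\vee}=\Omega_U$, and the restriction $i^*\alpha$ of the double-dual map is the identity isomorphism of $\Omega_U$. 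Applying $i_*$, the pushed-forward map $i_*i^*\alpha\colon i_*i^*\Omega_X\to i_*i^*\Omega_X^{\vee\vee}$ is again an isomorphism.

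With these two vertical isomorphisms in hand, I would write down the naturality square of the unit of the adjunction $(i^*,i_*)$ applied to the morphism $\alpha$:
\begin{equation*}
\begin{gathered}
\xymatrix{
\Omega_X \ar[rr]^{\alpha} \ar[d]_{\beta} & & \Omega_X^{\vee\vee} \ar[d]^{\gamma}_{\simeq} \\
i_*i^*\Omega_X \ar[rr]_{i_*i^*\alpha}^{\simeq} & & i_*i^*\Omega_X^{\vee\vee}
}
\end{gathered}
\end{equation*}
Commutativity gives $\gamma\circ\alpha=(i_*i^*\alpha)\circ\beta$. Since $\gamma$ and $i_*i^*\alpha$ are isomorphisms, this common composite has kernel equal to $\ker\alpha$ and simultaneously equal to $\ker\beta$, whence $\ker\alpha=\ker\beta$. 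Therefore $\alpha$ is injective if and only if $\beta$ is injective, which is exactly the asserted equivalence. The main obstacle is the isomorphism $\gamma$: one must genuinely invoke the Hartogs-type extension property of reflexive (equivalently $S_2$) sheaves on a normal variety, and hence verify carefully that $U$ omits only the codimension-$\geq 2$ locus $Z$; once $\gamma$ and $i_*i^*\alpha$ are known to be isomorphisms, the remainder is a formal diagram chase.
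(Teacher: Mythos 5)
Your proposal is correct and follows essentially the same route as the paper: the same naturality square relating $\alpha$ and $\beta$, with the right vertical arrow shown to be an isomorphism via reflexivity of $\Omega_X^{\vee\vee}$ and the codimension-$\geq 2$ extension property on the normal variety $X$, and the bottom arrow shown to be an isomorphism because $\Omega_U$ is locally free on the smooth locus. The only cosmetic difference is that the paper disposes of the case $\dim X = 1$ separately, whereas your argument absorbs it automatically since the singular locus is then empty.
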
 
\begin{proof}
If $\text{dim }X=1$, the result is immediate. Thus we assume $\text{dim }X \geq 2$. 
Let us denote $S$ to be the space of singular points of $X$. Then codimension of $S$ is at least $2$ in $X$. Consider the following commutative diagram:
\begin{equation}\label{sheaf_diagram}
\begin{gathered}
\xymatrix{
 \Omega_{X} \ar[r] \ar[d] &  \Omega_{X}^{\vee \vee} \ar[d]\\
 i_* i^*  \Omega_{X} \ar[r] &  i_*i^* \Omega_{X}^{\vee \vee}
}
\end{gathered} 
\end{equation}
\hf First note that $i^*\Omega_{X}=\Omega_U$. Now observe that $i^*\Omega_X^{\vee \vee}$ is the same as $\Omega_U^{\vee \vee}$. This follows from the general fact that given two sheaves $\mathcal{F}$ and $\mathcal{G}$ of $\cO_X$-modules, the sheaf $i^*\Hom_{\cO_X} (\mathcal{F}, \mathcal{G})$ is the same as $\Hom_{\cO_U} (i^*\mathcal{F}, i^*\mathcal{G})$. Now $U$ being the non-singular locus of $X$, the sheaf $\Omega_U$ is locally free, and hence the natural map $$\Omega_U \longrightarrow \Omega_{U}^{\vee \vee}$$ is an isomorphism. The functionality of $i_*$ immediately yields that the lower horizontal arrow of the diagram \eqref{sheaf_diagram} is an isomorphism.  \\
\hf Next we will show that the right vertical arrow of the diagram is an isomorphism. Note that $\Omega_X^{\vee}$ is coherent, and thus $\Omega_X^{\vee \vee}$ is reflexive by \cite[Corollary 1.2]{Hartshorne_stable_reflexive_sheaves}. Now apply \cite[Proposition 1.6]{Hartshorne_stable_reflexive_sheaves} to conclude the isomorphism.\\
\hf We get both the lower horizontal arrow and the right vertical arrow are isomorphisms. The commutativity of the diagram \eqref{sheaf_diagram} gives us that injectivity of \eqref{double_dual_natural_map_2} and \eqref{pull-push-equation} are equivalent.
\end{proof}
Now we are in a position to prove our main result. For the convenience of the reader we repeat the full statement.
\begin{thm}\label{injectivity of sheaf of differential}
Let $X$ be a normal algebraic variety over $k$, and $Y$ be the singular locus of $X$. If $i$ denotes the inclusion map of $U := X \setminus Y$ in $X$, then the natural morphism $\Omega_{X} \longrightarrow  i_* i^*  \Omega_{X}$ is injective if and only if the natural map $$\mathcal{I}^2 \longrightarrow j_*j^* \mathcal{I}^2$$ is surjective, where $j$ is the inclusion of $U \times_k U$ in $X\times_k X$.
\end{thm}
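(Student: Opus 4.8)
The plan is to read both conditions off the conormal exact sequence of the diagonal and to transfer one into the other by a single application of the snake lemma. Recall the standard identification $\mathcal{I}/\mathcal{I}^2 \cong \Delta_* \Omega_X$ of the conormal sheaf of the diagonal with the sheaf of differentials, which yields the short exact sequence
$$0 \longrightarrow \mathcal{I}^2 \longrightarrow \mathcal{I} \longrightarrow \Delta_* \Omega_X \longrightarrow 0$$
on $X \times_k X$. First I would record that, since the smooth locus of a product over a field is the product of the smooth loci, the singular locus of $X \times_k X$ is $(Y \times_k X) \cup (X \times_k Y)$; as $X$ is normal, $Y$ has codimension $\geq 2$, so the complement $U \times_k U$ of this singular locus contains all codimension $1$ points of $X \times_k X$, which is normal and integral by \Cref{normality preserves under product}. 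Thus $j$ is exactly the inclusion of the non-singular locus, and its complement has codimension $\geq 2$.

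Next I would apply the exact functor $j^*$ and then the left-exact functor $j_*$ to the displayed sequence, producing a commutative ladder whose top row is that short exact sequence and whose (left-exact) bottom row is $0 \to j_* j^* \mathcal{I}^2 \to j_* j^* \mathcal{I} \to j_* j^* \Delta_* \Omega_X$, with vertical natural maps which I denote $a$, $b$, $c$. The middle map $b\colon \mathcal{I} \to j_* j^* \mathcal{I}$ is an isomorphism by \Cref{ideal_sheaf_isomorphism_lemma}: the closed subscheme cut out by $\mathcal{I}$ is the diagonal $\Delta(X) \cong X$, which is integrally closed because $X$ is normal, and $(U \times_k U) \cap \Delta(X) = \Delta(U) \neq \emptyset$. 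The map $a\colon \mathcal{I}^2 \to j_* j^* \mathcal{I}^2$ is injective by the same Hartogs comparison used in that proposition, against the isomorphism $\cO_{X \times_k X} \xrightarrow{\sim} j_* j^* \cO_{X \times_k X}$; note, however, that \Cref{ideal_sheaf_isomorphism_lemma} does \emph{not} apply to $\mathcal{I}^2$, since $\mathcal{I}^2$ defines the non-reduced first order neighbourhood of the diagonal, and this is exactly why surjectivity of $a$ is the genuine content.

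The key identification is for $c$. Since $\Delta^{-1}(U \times_k U) = U$ and $j$ is an open immersion, base change gives $j^* \Delta_* \Omega_X \cong \Delta'_* \Omega_U$, where $\Delta'\colon U \to U \times_k U$ is the diagonal of $U$ and $\Omega_U = i^* \Omega_X$. Applying $j_*$ and using $j \circ \Delta' = \Delta \circ i$ then yields $j_* j^* \Delta_* \Omega_X \cong \Delta_* i_* i^* \Omega_X$, under which $c$ is identified with $\Delta_*$ applied to the natural map $\Omega_X \to i_* i^* \Omega_X$. As $\Delta_*$ is exact and faithful, $c$ is injective if and only if $\Omega_X \to i_* i^* \Omega_X$ is injective.

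Finally I would run the snake lemma on the ladder. Because $b$ is an isomorphism, $\ker b = 0 = \operatorname{coker} b$, so the six-term exact sequence $\ker a \to \ker b \to \ker c \to \operatorname{coker} a \to \operatorname{coker} b \to \operatorname{coker} c$ collapses to an isomorphism $\ker c \cong \operatorname{coker} a$. Hence $a$ is surjective if and only if $c$ is injective; that is, $\mathcal{I}^2 \to j_* j^* \mathcal{I}^2$ is surjective if and only if $\Omega_X \to i_* i^* \Omega_X$ is injective, which is the assertion. I expect the main obstacle to be the careful verification of the base-change identification of $j_* j^* \Delta_* \Omega_X$ together with the compatibility of the natural transformations, so that $c$ really is $\Delta_*$ of the differentials map; once this and the hypotheses of \Cref{ideal_sheaf_isomorphism_lemma} are in place, the snake lemma finishes the argument mechanically.
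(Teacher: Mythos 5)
Your proposal is correct and follows essentially the same route as the paper's own proof: the conormal sequence $0 \to \mathcal{I}^2 \to \mathcal{I} \to \Delta_*\Omega_X \to 0$, the isomorphism $\mathcal{I} \xrightarrow{\sim} j_*j^*\mathcal{I}$ via \Cref{ideal_sheaf_isomorphism_lemma}, the Hartogs-type injectivity of $\mathcal{I}^2 \to j_*j^*\mathcal{I}^2$, and the snake lemma on the resulting ladder, with the right vertical arrow identified as $\Delta_*$ of $\Omega_X \to i_*i^*\Omega_X$. Your base-change identification of $j_*j^*\Delta_*\Omega_X$ is just a more explicit phrasing of the paper's use of the commutative square relating $\Delta_U$ and $\Delta_X$, so there is nothing substantive to add.
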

\begin{proof}
If $X$ is a normal algebraic curve, it is always non-singular. Thus $U=X$, and both $i, j$ are the identity maps. The theorem follows immediately.\\
\hf From now onward we will assume $\dim X \geq 2$.
Let $$\Delta_X: X \longrightarrow X \times_k X$$ be the diagonal morphism. Then we get the following commutative diagram 
\begin{equation}\label{commutative diagram}
\begin{gathered}
\xymatrix{
U \ar[rr]^{\Delta_U} \ar[d]_i && U \times_k U \ar[d]^{i \times_k i} \\
X \ar[rr]^{\Delta_X} && X \times_k X
}
\end{gathered}
\end{equation}

Let us denote the map $i \times_k i$ by $j$ for the sake of notational simplicity. As $X$ is separated, $\Delta_X(X)$ (respectively $\Delta_U(U)$) is closed in $X \times_k X$ ( $ U \times_k U$, respectively). Note that $X \times_k X$ is also a normal variety by \Cref{normality preserves under product}. Let $\mathcal{I}$ be the ideal sheaf of $\Delta_X(X)$ in $X \times_k X$. Then $j^* \mathcal{I}$ will be the ideal sheaf of $\Delta_U(U)$ in $U \times_k U$. \\
\hf Note that $\left( X \times_k X\right)$ is a normal variety, and a straightforward dimension calculation shows that the codimension
\[
\mathrm{codim }_{X \times_k X} \big(\left( X \times_k X\right)\setminus \left( U \times_k U \right)\big) \geq 2.
\]
Thus the natural morphism $$\cO_{X \times_k X} \longrightarrow j_*j^* \cO_{X \times_k X}$$ is an isomorphism, thanks to the Hartog's phenomenon.  On the other hand, $\Delta_X(X)$, being isomorphic to $X$, is a reduced and irreducible closed subscheme of $X \times_k X$. Also $\Delta_X(X) \cap \left( U \times_k U \right)$ is the same as $\Delta_U(U)$, which is non-empty. It follows from \Cref{ideal_sheaf_isomorphism_lemma} that  the natural morphism of ideals $$\mathcal{I} \longrightarrow j_*j^* \mathcal{I}$$ is an isomorphism.\\
\hf Now consider the following commutative diagram:
   \begin{equation*}
\begin{gathered}
\xymatrix{
 \mathcal{I}^2 \ar[rr] \ar[d] & & j_*j^*\mathcal{I}^2 \ar[d]\\
 \cO_{X \times_k X} \ar[rr]_{\simeq} & & j_*j^* \cO_{X \times_k X}
}
\end{gathered} 
\end{equation*}
Due to Hartog's phenomenon, the lower horizontal arrow is an isomorphism. Being ideal of the respective sheaves, both the vertical arrows are injective as well. It follows that the map 
$$\mathcal{I}^2 \longrightarrow j_*j^* \mathcal{I}^2$$ is always injective.\\
\begin{comment}
Therefore  $j_*j^* \mathcal{I}$ is a coherent ideal sheaf of $\cO_{X \times_k X}$. Next we will show that $j_*j^* \mathcal{I}$ defines a closed subscheme whose support is $\Delta_X(X)$, and $\mathcal{I}= j_*j^* \mathcal{I}$. Indeed applying \cite[Proposition 2.3.11]{Mumford-Oda}, the ideal sheaf $\overline{\mathcal{I}}$  defining the scheme theoretic closure is given by 
$$\overline{\mathcal{I}}(W)= \ker \left( \cO_{X \times_k X}(W) \longrightarrow \cO_{\Delta_U(U)}(\Delta_U(U) \cap W) \right).$$ 
Hartog's phenomenon yields that 
$$\overline{\mathcal{I}}(W)= \ker \left( \cO_{X \times_k X}(\left( U \times_k U \right)\cap W) \longrightarrow \cO_{\Delta_U(U)}(\Delta_U(U) \cap W) \right).$$
The above kernel is now $j_*j^*\mathcal{I}(W)$. Hence the ideal sheaves $\overline{\mathcal{I}}$ and $j_*j^*\mathcal{I}$ are the same and the support of the closed subscheme is $\Delta_X(X)$. By the uniqueness of reduced closed subschemes with given support (\cite[Proposition 2.3.11]{Mumford-Oda}), it follows that $j_*j^*\mathcal{I} \subseteq \mathcal{I}$. Also Hartog's phenomenon immediately yields that the natural morphism of ideals $$\mathcal{I} \longrightarrow j_*j^* \mathcal{I}$$ is injective. Thus $\mathcal{I}=j_*j^* \mathcal{I}$. Also we conclude that the natural morphism of ideals $$\mathcal{I}^2 \longrightarrow j_*j^* \mathcal{I}^2$$ is always injective.\\
\end{comment}
\hf Note that as $\cO_{X \times_k X}$-module the sheaf $\Omega_{X}$ is the same as $\mathcal{I}/\mathcal{I}^2$. Similarly, as $\cO_{U \times_k U}$-module the sheaf $\Omega_{U}$ is the same as $j^*\mathcal{I}/j^*\mathcal{I}^2$. Now let us  analyse the sheaf $i_*i^*\Omega_{X }$, i.e., $i_*\Omega_{U}$. The pushforward sheaf $i_*\Omega_{U /k}$ is isomorphic to  $j_* \left( j^* \mathcal{I}/ j^* \mathcal{I}^2 \right) $ as $\cO_{X \times_k X}$-module . Consider the following short exact sequence of $\cO_{U \times_k U}$-modules:
$$0 \longrightarrow j^* \mathcal{I}^2 \longrightarrow j^* \mathcal{I} \longrightarrow  j^* \mathcal{I}/ j^* \mathcal{I}^2 \longrightarrow 0.$$ Applying $j_*$ we get the following exact sequence of $\cO_{X \times_k X}$-modules:
$$0 \longrightarrow j_*j^* \mathcal{I}^2 \longrightarrow j_*j^* \mathcal{I} \longrightarrow j_* \left( j^* \mathcal{I}/ j^* \mathcal{I}^2 \right). $$
Now consider the following diagram of exact sequences of $\mathcal{O}_{X \times_k X}$-modules:

\begin{equation}\label{exact sequence diagram}
\begin{gathered}
\xymatrix{
0 \ar[r] & \mathcal{I}^2 \ar[r] \ar[d] & \mathcal{I} \ar[r] \ar[d] & \mathcal{I}/ \mathcal{I}^2 \ar[r] \ar[d] & 0 \\
0 \ar[r] & j_*j^* \mathcal{I}^2 \ar[r] & j_*j^* \mathcal{I} \ar[r] & j_* \left( j^* \mathcal{I}/ j^* \mathcal{I}^2 \right) &
}
\end{gathered}
\end{equation}

Note that the middle vertical arrow is an isomorphism and the left vertical one is injective. Applying the snake lemma, we conclude that the morphism $$\mathcal{I}/ \mathcal{I}^2  \longrightarrow j_* \left( j^* \mathcal{I}/ j^* \mathcal{I}^2 \right)$$ is injective if and only if $$\mathcal{I}^2 \longrightarrow j_*j^* \mathcal{I}^2$$ is an isomorphism. On the other hand, $$\Omega_{X} \longrightarrow  i_* i^*  \Omega_{X}$$ is injective if and only if $$\left(\Delta_X\right)_* \Omega_{X} \longrightarrow  \left(\Delta_X\right)_* i_*  \Omega_{U}$$ is injective as well. And the latter morphism is nothing but the rightmost vertical arrow in \cref{exact sequence diagram} as follows from the commutativity of the diagram in \cref{commutative diagram}. This completes the proof.
\end{proof}
\begin{rem}
    In the above proof, we note that the morphism $$\mathcal{I}^2 \longrightarrow j_*j^* \mathcal{I}^2$$ between the ideal sheaves is always injective and we also note that this morphism is not always an isomorphism. Therefore the \Cref{ideal_sheaf_isomorphism_lemma} can't be applicable. In fact, the ideal sheaf $\mathcal{I}^2$ gives us a closed subscheme of $X\times_k X$ whose support is $\Delta_X(X)$ which is irreducible, but the quotient sheaf $ \cO_{X \times_k X}/\mathcal{I}^2$ is not reduced. This violates the hypothesis of \Cref{ideal_sheaf_isomorphism_lemma}. 
\end{rem}
\section*{Acknowledgment}
The authors would like to thank S\"{o}nke Rollenske for  carefully going through an earlier draft and pointing out subtle issues. Both the authors are supported by the INSPIRE faculty fellowship (Ref No.: IFA21-MA 161 and IFA22-MA 186) funded by the DST, Govt. of India.
\bibliographystyle{abbrv}
\bibliography{ref}

\begin{thebibliography}{10}

\bibitem{Berger}
R.~Berger.
\newblock Differentialmoduln eindimensionaler lokaler {R}inge.
\newblock {\em Math. Z.}, 81:326--354, 1963.

\bibitem{Biswas-Gurjar-Kolte}
I.~Biswas, R.~V. Gurjar, and S.~U. Kolte.
\newblock On the {Z}ariski-{L}ipman conjecture for normal algebraic surfaces.
\newblock {\em J. Lond. Math. Soc. (2)}, 90(1):270--286, 2014.

\bibitem{Flenner}
H.~Flenner.
\newblock Extendability of differential forms on nonisolated singularities.
\newblock {\em Invent. Math.}, 94(2):317--326, 1988.

\bibitem{Graf}
P.~Graf.
\newblock The generalized {L}ipman-{Z}ariski problem.
\newblock {\em Math. Ann.}, 362(1-2):241--264, 2015.

\bibitem{Graf-Kovacs}
P.~Graf and S.~J. Kov\'{a}cs.
\newblock An optimal extension theorem for 1-forms and the {L}ipman-{Z}ariski
  conjecture.
\newblock {\em Doc. Math.}, 19:815--830, 2014.

\bibitem{Greb-KP}
D.~Greb, S.~Kebekus, and T.~Peternell.
\newblock Reflexive differential forms on singular spaces. {G}eometry and
  cohomology.
\newblock {\em J. Reine Angew. Math.}, 697:57--89, 2014.

\bibitem{Greb-R}
D.~Greb and S.~Rollenske.
\newblock Torsion and cotorsion in the sheaf of {K}\"{a}hler differentials on
  some mild singularities.
\newblock {\em Math. Res. Lett.}, 18(6):1259--1269, 2011.

\bibitem{EGA-IV}
A.~Grothendieck.
\newblock \'{E}l\'{e}ments de g\'{e}om\'{e}trie alg\'{e}brique. {IV}. \'{E}tude
  locale des sch\'{e}mas et des morphismes de sch\'{e}mas {IV}.
\newblock {\em Inst. Hautes \'{E}tudes Sci. Publ. Math.}, (32):361, 1967.

\bibitem{Hartshorne_Algebraic}
R.~Hartshorne.
\newblock Algebraic de {R}ham cohomology.
\newblock {\em Manuscripta Math.}, 7:125--140, 1972.

\bibitem{Hartshorne_stable_reflexive_sheaves}
R.~Hartshorne.
\newblock Stable reflexive sheaves.
\newblock {\em Math. Ann.}, 254(2):121--176, 1980.

\bibitem{Huneke}
C.~Huneke, S.~Maitra, and V.~Mukundan.
\newblock Torsion in differentials and {B}erger's conjecture.
\newblock {\em Res. Math. Sci.}, 8(4):Paper No. 60, 15, 2021.

\bibitem{Jorder}
C.~J\"{o}rder.
\newblock A weak version of the {L}ipman-{Z}ariski conjecture.
\newblock {\em Math. Z.}, 278(3-4):893--899, 2014.

\bibitem{Kebekus}
S.~Kebekus.
\newblock Pull-back morphisms for reflexive differential forms.
\newblock {\em Adv. Math.}, 245:78--112, 2013.

\bibitem{Kunz}
E.~Kunz.
\newblock {\em K\"{a}hler differentials}.
\newblock Advanced Lectures in Mathematics. Friedr. Vieweg \& Sohn,
  Braunschweig, 1986.

\bibitem{Lipman}
J.~Lipman.
\newblock Free derivation modules on algebraic varieties.
\newblock {\em American Journal of Mathematics}, 87(4):874--898, 1965.

\bibitem{Miller-Vassiliadou}
C.~Miller and S.~Vassiliadou.
\newblock ({C}o)torsion of exterior powers of differentials over complete
  intersections.
\newblock {\em J. Singul.}, 19:131--162, 2019.

\bibitem{Red_book}
D.~Mumford.
\newblock {\em The red book of varieties and schemes}, volume 1358 of {\em
  Lecture Notes in Mathematics}.
\newblock Springer-Verlag, Berlin, expanded edition, 1999.
\newblock Includes the Michigan lectures (1974) on curves and their Jacobians,
  With contributions by Enrico Arbarello.

\bibitem{Mumford-Oda}
D.~Mumford and T.~Oda.
\newblock {\em Algebraic geometry. {II}}, volume~73 of {\em Texts and Readings
  in Mathematics}.
\newblock Hindustan Book Agency, New Delhi, 2015.

\bibitem{stacks-project}
T.~{Stacks Project Authors}.
\newblock \textit{Stacks Project}.
\newblock \url{https://stacks.math.columbia.edu}, 2018.

\end{thebibliography}
\end{document}